\documentclass{amsart}
\usepackage{amssymb,amsmath,amsthm}
\title{A topos for a nonstandard functional interpretation}
\author{Benno van den Berg}
\date{January 16, 2013}
\usepackage{ast2}
\usepackage[margin=1.4in]{geometry}

\usepackage{ast2}

\begin{document}

\maketitle

\begin{abstract}
We introduce a new topos in order to give a semantic account of the nonstandard functional interpretation introduced by Eyvind Briseid, Pavol Safarik and the author.
\end{abstract}

\section{Introduction}

The aim of this short note is to give a semantic, topos-theoretic account of the nonstandard functional interpretation which the author, together with Eyvind Briseid and Pavol Safarik, introduced in \cite{bergetal12}, thus answering a question the author left open in \cite{berg12}. In this way this note is similar to the author's paper on the Herbrand topos \cite{berg12}, which did the same for Herbrand realizability, a realizability interpretation we also introduced in \cite{bergetal12}. Indeed, a good way to think about the topos to be defined here is as a Herbrandized version of the modified Diller-Nahm topos (for which see \cite{streicher06, biering08}).

\section{Notation}

Let us first establish some notation. We assume that we have fixed some pairing function, coding pairs of natural numbers as natural numbers. We will not distinguish notationally between pairs and codes of pairs and write $(n, m)$ for both the pair consisting of $n$ and $m$ and its code. Also, Kleene application will be written as ordinary application, so the result of applying the $n$th recursive function to the argument $m$ is written as $n(m)$, whenever it is defined.
 
For $X, Y \in {\rm Pow}(\NN)$, we will write
\begin{eqnarray*}
X \times Y & = & \{ (x, y) \in \NN \, : \, x \in X, y \in Y \}, \\
X + Y & = & \{ (0, x) \, : x \in X \} \cup \{ (1, y) \, : \, y \in Y \}, \\
X \to Y & = & \{ a \in \NN \, : \, (\forall x \in X) \, a(x) \mbox{ is defined and } a(x) \in Y \},
\end{eqnarray*}
as usual. In addition, we will write
\[ X^* = \{ a \in \NN \, : \, a \mbox{ codes a finite set all whose elements belong to } X \}. \]
Note that the empty set always belongs to $X^*$. We will use common set-theoretic notation when manipulating elements of $X^*$. 

We will always regard $X^*$ as a (pre)order, ordered by inclusion. Also note that we have an ``exponential isomorphism'' $(X+Y)^* \cong X^* \times Y^*$, which is not just a bijection, but also an order-isomorphism (if we order $X^* \times Y^*$ in the standard way). In what follows, we will often implicitly use this isomorphism and regard elements of $(X + Y)^*$ as pairs $(a, b)$ with $a \in X^*$ and $b \in Y^*$.

It will also be convenient to introduce the following piece of notation: if $x \in (S \to T^*)^*$ and $y \in S$, then we will write
\[ x[y] := \bigcup_{z \in x} z(y) \in T^*. \]
Another thing which we often implicitly use is that $x \subseteq x'$ implies $x[y] \subseteq x'[y]$ for all $y$.

\section{Definition of the tripos}

We define an preorder indexed over the category of sets and then show it is a tripos. First of all, we put
\begin{eqnarray*}
\Sigma_{st} & = & \{ (X, Y, R) \in {\rm Pow}(\NN)^2 \times {\rm Pow}(\NN \times \NN) \, : \, R \subseteq X^* \times Y \\
& & \mbox{ and } (\forall x, x' \in X^*, y \in Y) \, (x, y) \in R, x \subseteq x' \to (x', y) \in R \}.
\end{eqnarray*}
For $p = (X, Y, R)\in \Sigma_{st}$ we will  write 
\begin{eqnarray*}
p^+ & = & X, \\
p^{++} & = & X^*, \\
p^- & = & Y, \\
p(x, y) & = & R(x, y),
\end{eqnarray*}
respectively.

\begin{defi}{Dstttripos}
For any set $I$ the preorder above $I$ consists of functions $I \to \Sigma_{st}$. We write $\vdash_I$ for its preorder structure and we will have $\varphi \vdash_I \psi$ iff there exist
\begin{eqnarray*}
e^+ & \in & \bigcap_{i \in I} \varphi_i^{++} \to \psi_i^{++} \\
e^- & \in & \bigcap_{i \in I}  \varphi_i^{++} \times \psi_i^- \to (\varphi_i^-)^*
\end{eqnarray*}
such that
\[ \forall i \in I, a \in \varphi_i^{++}, b \in \psi_i^- \, [ \, \forall c \in e^-(a, b) \, \varphi_i(a, c) \to \psi_i(e^+(a), b)]. \] 
Reindexing is simply given by precomposition.
\end{defi}

\begin{lemm}{indexedpreorder} This defines an indexed preorder.
\end{lemm}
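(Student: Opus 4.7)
The plan is to verify the three requirements of an indexed preorder: reflexivity and transitivity of $\vdash_I$ for each set $I$, and monotonicity of reindexing. Since reindexing is precomposition and the conditions in Definition~\ref{Dstttripos} are universally quantified over the index set, monotonicity of reindexing should be immediate: any pair $(e^+,e^-)$ witnessing $\varphi \vdash_I \psi$ also witnesses $\varphi \circ u \vdash_J \psi \circ u$ for any $u : J \to I$, because $\{u(j) : j \in J\} \subseteq I$.

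For reflexivity of $\vdash_I$ I would take $e^+$ to be a code for the identity and set $e^-(a,b) := \{b\}$; the required implication then reduces to $\varphi_i(a,b) \to \varphi_i(a,b)$, which is trivial.

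The substantive step is transitivity. Given $(e^+,e^-)$ witnessing $\varphi \vdash_I \psi$ and $(f^+,f^-)$ witnessing $\psi \vdash_I \chi$, I would take $g^+(a) := f^+(e^+(a))$ for the positive witness. For the negative side, observe that $f^-(e^+(a),b)$ is a finite subset of $\psi_i^-$ and that, for each $c$ in it, $e^-(a,c)$ is a finite subset of $\varphi_i^-$; so I would set
\[ g^-(a,b) \; := \; \bigcup_{c \,\in\, f^-(e^+(a),b)} e^-(a,c), \]
which is again a finite subset of $\varphi_i^-$ and can be computed effectively from codes of $f^-(e^+(a),b)$ and of $e^-$. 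To verify the required entailment, I would assume $\varphi_i(a,c')$ for every $c'$ in this union; then each fixed $c \in f^-(e^+(a),b)$ satisfies $\varphi_i(a,c')$ for all $c' \in e^-(a,c)$, which yields $\psi_i(e^+(a),c)$ by the first entailment. Quantifying over such $c$ and applying the second entailment then gives $\chi_i(f^+(e^+(a)),b) = \chi_i(g^+(a),b)$, as required.

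The only point requiring any care is that $g^+$ and $g^-$ must in fact be codes of partial recursive functions defined on the appropriate domains; this is routine, since Kleene composition preserves the relevant totality and the finite union in $g^-$ is uniformly computable on codes of finite sets. I do not anticipate a real obstacle anywhere: the proof is essentially a check that the $(-)^*$ structure on the negative side composes correctly, and this is precisely why the definition uses finite sets $e^-(a,b)$ rather than single elements.
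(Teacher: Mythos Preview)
Your proposal is correct and matches the paper's proof essentially verbatim: the same identity/singleton pair for reflexivity, the same composite $g^+(a)=f^+(e^+(a))$ with $g^-(a,b)=\bigcup_{c\in f^-(e^+(a),b)} e^-(a,c)$ for transitivity, and the same observation that stability under reindexing is immediate. You supply a bit more detail on the verification and on computability of the codes, but there is no substantive difference.
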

\begin{proof}
$p \vdash p$ is realized by $e^+(x) = x, e^-(x, y) = \{ y \}$. In addition, if $(e^+, e^-)$ realizes $p \vdash q$ and $(f^+, f^-)$ realizes $q \vdash r$, then $p \vdash r$ is realized by $(g^+, g^-)$ with $g^+(x) = f^+(e^+(x)), g^-(x, z) = \bigcup_{y \in f^-(e^+(x), z)} e^-(x, y)$. The preorder structure is obviously stable along reindexing.
\end{proof}

\begin{theo}{istripos}
The indexed preorder defined above is a tripos.
\end{theo}

We will call the associated topos the \emph{$D_{st}$-topos} and denote it by ${\bf Dst}$. The following sequence of lemmas will prove \reftheo{istripos}.

\begin{lemm}{truthandfalsity}
Truth is given by $(\emptyset, \emptyset, \emptyset)$ and falsity by $(\emptyset, \{ 0 \}, \emptyset)$.
\end{lemm}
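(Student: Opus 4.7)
The plan is to verify the universal properties defining the top and bottom of the preorder in a tripos: the proposed $\top = (\emptyset, \emptyset, \emptyset)$ must satisfy $\varphi \vdash_I \top$ for every $\varphi : I \to \Sigma_{st}$, and the proposed $\bot = (\emptyset, \{0\}, \emptyset)$ must satisfy $\bot \vdash_I \varphi$ for every such $\varphi$. Since both proposals are constant in $i$, they give predicates over any $I$ by the corresponding constant function, and stability under reindexing is automatic. It is also immediate that both triples lie in $\Sigma_{st}$, since the empty relation trivially satisfies the monotonicity requirement.

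For the top, I would first observe that $\top^{++} = \emptyset^* = \{\emptyset\}$ and $\top^- = \emptyset$. Given arbitrary $\varphi$, define $e^+$ to be the constant function sending every $a \in \varphi_i^{++}$ to $\emptyset$, and take $e^-$ to be the unique (empty) function with domain $\varphi_i^{++} \times \emptyset$. The entailment condition quantifies over $b \in \top_i^- = \emptyset$ and so is vacuously satisfied.

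For the bottom, the key observation is that if one naively set $e^-(a,b) = \emptyset$, the premise $\forall c \in e^-(a,b)\,\bot_i(a,c)$ would be vacuously true, forcing one to establish $\varphi_i(e^+(a),b)$, which of course cannot be done uniformly in $\varphi$. The trick is to exploit the single element $0 \in \bot^- = \{0\}$ to make the premise \emph{false} rather than vacuously true. Concretely, put $e^+(\emptyset) = \emptyset \in \varphi_i^{++}$ (the empty finite subset of $\varphi_i^+$) and $e^-(\emptyset,b) = \{0\}$ for every $b \in \varphi_i^-$. Then the premise becomes $\bot_i(\emptyset,0)$, which fails since the relational component of $\bot$ is empty, and the implication holds vacuously at every $b$.

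I do not anticipate any real obstacle; the only subtle point is the choice of $e^-$ in the falsity clause, where one has to remember that a non-empty second component is precisely what allows the antecedent to be defeated, and hence what makes $(\emptyset, \{0\}, \emptyset)$ behave as a bottom element while $(\emptyset, \emptyset, \emptyset)$ behaves as a top.
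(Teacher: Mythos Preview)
Your argument is correct. The paper states this lemma without proof, treating it as immediate; you have supplied precisely the routine verification one would expect, including the essential observation that the nonempty counter-type $\bot^- = \{0\}$ is what allows the antecedent $\forall c \in e^-(a,b)\,\bot_i(a,c)$ to be falsified, whereas $\top^- = \emptyset$ renders the entailment condition vacuous.
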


\begin{lemm}{products}
The conjunction $p \land q$ is given by
\begin{eqnarray*}
(p \land q)^+ & = & p^+ + q^+, \\
(p \land q)^- & = & p^- + q^-, \\
(p \land q)((n, m), (i, k)) & \Leftrightarrow & \big( i = 0 \land p(n, k) \big) \mbox { or } \big( i = 1 \land q(m, k) \big).
\end{eqnarray*}
\end{lemm}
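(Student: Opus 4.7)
The plan is to verify three things: (i) that the formula $(X, Y, R)$ displayed for $p \land q$ actually lies in $\Sigma_{st}$, i.e.\ that $R$ is monotone in its first coordinate; (ii) that the two projections $p \land q \vdash p$ and $p \land q \vdash q$ hold; and (iii) the universal property: whenever $r \vdash p$ and $r \vdash q$ in the tripos, also $r \vdash p \land q$. Since the indexed preorder is defined pointwise in $I$ and the realizers I will write down do not depend on $i \in I$, I can (and will) carry out everything fibrewise, suppressing the index.

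Step (i) is immediate from the definition: if $(n, m) \subseteq (n', m')$ in $(p^+ + q^+)^*$ (which under the exponential isomorphism $(p^+ + q^+)^* \cong (p^+)^* \times (q^+)^*$ means $n \subseteq n'$ and $m \subseteq m'$), then upward closure of $p$ and $q$ in their first coordinates yields upward closure of $R$. For step (ii), to realize $p \land q \vdash p$, I take
\[ e^+(n, m) = n, \qquad e^-\bigl((n, m), k\bigr) = \{(0, k)\}, \]
using the identification above. Then for every $c = (0, k) \in e^-((n,m), k)$ the hypothesis $(p \land q)((n,m), (0, k))$ unfolds to $p(n, k) = p(e^+(n,m), k)$, as required. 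The witnesses for $p \land q \vdash q$ are symmetric, namely $e^+(n,m) = m$ and $e^-((n,m), k) = \{(1, k)\}$.

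For step (iii), suppose $(e_1^+, e_1^-)$ realizes $r \vdash p$ and $(e_2^+, e_2^-)$ realizes $r \vdash q$. Define
\[ g^+(x) = \bigl(e_1^+(x), e_2^+(x)\bigr) \in (p^+)^* \times (q^+)^* \cong (p^+ + q^+)^*, \]
and split $g^-$ by cases along the coproduct:
\[ g^-\bigl(x, (0, k)\bigr) = e_1^-(x, k), \qquad g^-\bigl(x, (1, k)\bigr) = e_2^-(x, k). \]
A case distinction on $i \in \{0, 1\}$ then reduces the required implication
$\forall c \in g^-(x, (i,k))\bigl[r(x, c) \to (p \land q)(g^+(x), (i, k))\bigr]$
directly to the two given hypotheses on the $e_j^\pm$.

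There is no real obstacle here; the only point requiring care is to keep the exponential isomorphism $(X + Y)^* \cong X^* \times Y^*$ straight, so that an element of $(p^+ + q^+)^*$ is consistently handled as a pair $(n, m)$ with $n \in (p^+)^*$ and $m \in (q^+)^*$, and so that the realizers on the negative side correctly dispatch on the coproduct tag $i \in \{0, 1\}$. Once that bookkeeping is fixed, all three verifications are one-line calculations, and the formulas for $e^+, e^-, g^+, g^-$ given above are essentially forced.
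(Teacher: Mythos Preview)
Your proof is correct and essentially identical to the paper's: the same projection realizers (your $\{(0,k)\}$ is the paper's $(\{k\},\emptyset)$ under the exponential isomorphism $(p^-+q^-)^*\cong (p^-)^*\times(q^-)^*$) and the same case-split definition of $g^-$ for the universal property, with the bonus that you explicitly check upward closure. One small slip in step~(iii): the required implication has the quantifier scoping only over the hypothesis, i.e.\ $\bigl[\forall c \in g^-(x,(i,k))\, r(x,c)\bigr] \to (p\land q)(g^+(x),(i,k))$, not the whole implication as you wrote, but the case analysis goes through either way.
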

\begin{proof}
Note that we have used the exponential isomorphism $(X + Y)^* \cong X^* \times Y^*$ in order to identify $(p \land q)^{++}$ with $p^{++} \times q^{++}$. We will keep on making this identification.

The projection $p \land q \vdash p$ is realized by $e^+(a, b) = a$ and $e^-((a,b), c) = (\{ c \}, \emptyset)$, while $p \land q \vdash q$ is realized by $e^+(a, b) = b$ and $e^-((a,b),c) = (\emptyset, \{ c \})$.

Now suppose $r \vdash p$ is realized by $(e^+, e^-)$, while $r \vdash q$ is realized by $(f^+, f^-)$. Then $r \vdash p \land q$ is realized by $g^+(x) = (e^+(x), f^+(x))$ and $g^-(x, (0, y)) = e^-(x,y)$ and $g^-(x,(1,y)) = f^-(x, y)$.
\end{proof}

\begin{lemm}{sums}
The disjunction $p \lor q$ is given by
\begin{eqnarray*}
(p \lor q)^+ & = & p^+ + q^+, \\
(p \lor q)^- & = & p^- \times q^-, \\
(p \lor q)((n, m), (k, l)) & \Leftrightarrow & p(n, k) \mbox { or } q(m, l).
\end{eqnarray*}
\end{lemm}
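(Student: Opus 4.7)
The plan is to verify that the proposed formula realizes the disjunction in the tripos; this splits into checking that $p \lor q$ lies in $\Sigma_{st}$, producing realizers for the two coprojections $p \vdash p \lor q$ and $q \vdash p \lor q$, and then verifying the universal property. Well-definedness in $\Sigma_{st}$ is a routine monotonicity check: via the exponential isomorphism $(p^+ + q^+)^* \cong p^{++} \times q^{++}$ the first argument of $R$ is a pair $(n, m)$ ordered componentwise, and monotonicity in this pair follows directly from monotonicity of $p$ and $q$ in their first arguments.

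For the coprojection $p \vdash p \lor q$, I would set $e^+(a) = (a, \emptyset)$ and $e^-(a, (k, l)) = \{ k \}$; the verification then reduces to the trivial implication $p(a, k) \to \big( p(a, k) \lor q(\emptyset, l) \big)$. The other coprojection $q \vdash p \lor q$ is symmetric.

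The universal property is the main content. Given $r$ together with realizers $(e^+, e^-)$ for $p \vdash r$ and $(f^+, f^-)$ for $q \vdash r$, I would take
\begin{eqnarray*}
g^+(a, b) & = & e^+(a) \cup f^+(b), \\
g^-((a, b), c) & = & \{ (k, l) \, : \, k \in e^-(a, c), \, l \in f^-(b, c) \}.
\end{eqnarray*}
The verification then assumes that for every $(k, l)$ in this product the disjunction $p(a, k) \lor q(b, l)$ holds, and must conclude $r(g^+(a, b), c)$. Arguing classically at the meta-level, either every $k \in e^-(a, c)$ satisfies $p(a, k)$---whence $r(e^+(a), c)$ by the first realizer---or some $k_0 \in e^-(a, c)$ does not, which forces $q(b, l)$ for every $l \in f^-(b, c)$ and yields $r(f^+(b), c)$ by the second realizer. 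Monotonicity of $r$ in its first argument then upgrades either conclusion to $r(e^+(a) \cup f^+(b), c)$.

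The subtlety worth flagging is that this last case split is classical, whereas $g^+$ and $g^-$ must themselves be recursive and uniform in $(a, b, c)$. This is precisely why $g^+$ is forced to take the union $e^+(a) \cup f^+(b)$ rather than attempt to pick out a ``correct'' branch: the pair $(g^+, g^-)$ has to work simultaneously in both cases, while the meta-theoretic argument that it does is free to distinguish them.
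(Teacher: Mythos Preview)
Your proof is correct and uses exactly the same realizers as the paper's, for both the coprojections and the universal property. The one point worth flagging is your remark that the case split is ``classical at the meta-level'': the paper observes instead that the inference from $\forall k \in e^-(a,c)\,\forall l \in f^-(b,c)\,(p(a,k)\lor q(b,l))$ to $(\forall k\,p(a,k))\lor(\forall l\,q(b,l))$ is in fact intuitionistically valid, because $e^-(a,c)$ and $f^-(b,c)$ are finite sets and disjunction distributes over finite conjunction constructively. So the discussion in your final paragraph, while not wrong (the ambient metatheory is classical set theory), slightly overstates the logical cost of the argument.
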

\begin{proof}
Again, we identify $(p \lor q)^{++}$ with $p^{++} \times q^{++}$.

First, the inclusions. $p \vdash p \lor q$ is realized by $e^+(x) = (x, \emptyset)$ and $e^-(x, (y, z)) = \{ y \}$, while $q \vdash p \lor q$ is realized by $e^+(x) = (\emptyset,x)$ and $e^-(x, (y, z)) = \{ z \}$.

Now suppose $p \vdash r$ is realized by $(e^+, e^-)$, i.e.,
\[  \forall a \in p^{++}, b \in r^- \, [ \, \forall c \in e^-(a, b) \, p(a, c) \to r(e^+(a), b)], \]
while $q \vdash r$ is realized by $(f^+, f^-)$, i.e.,
\[  \forall a \in q^{++}, b \in r^- \, [ \, \forall c \in f^-(a, b) \, q(a, c) \to r(f^+(a), b)]. \]
Then, we claim, $p \lor q \vdash r$ is realized by $g^+(x, y) = e^+(x) \cup f^+(x)$ and $g^-((x, y), z) = \{ (s,t) \, : \, s \in e^-(x,z), t \in f^-(y,z) \}$. Because we have for all $x \in p^{++}, y \in q^{++}, z \in r^-$ that:
\begin{eqnarray*}
\forall (s,t) \in g^-((x, y), z) \, \big( \, p(x, s) \lor q(y, t) \, \big) & \to  \\
\forall s \in e^-(x,z), t \in f^-(y,z) \, \big( \, p(x, s) \lor q(y, t) \, \big) & \to & \mbox{(intuitionistic logic)} \\
\forall s \in e^-(x,z) \, p(x, s) \lor \forall t \in f^-(y,z) \, q(y, t)  & \to  \\
r(e^+(x), z) \lor r(f^+(y), z) & \to & \mbox{(upwards closure in first component)} \\
r(g^+(x, y), z).
\end{eqnarray*}
\end{proof}

\begin{lemm}{exponentials}The implication $p \to q$ is given by
\begin{eqnarray*}
(p \to q)^+ & = & (p^{++} \to q^{++}) + (p^{++} \times q^- \to (p^-)^* ) \\
(p \to q)^- & = & p^{++} \times q^-, \\
(p \to q)((e^+, e^-), (a, b)) & \Leftrightarrow & \big( \, \forall c \in e^-[(a, b)]p(a, c) \, \big)\to q(e^+[a], b).
\end{eqnarray*}
\end{lemm}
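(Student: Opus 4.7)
The plan is to verify three things: (i) the formula for $p \to q$ genuinely lies in $\Sigma_{st}$, i.e.\ the relation is upwards closed in its first argument; (ii) the evaluation morphism $(p \to q) \land p \vdash q$ is realized; and (iii) given a realizer for $r \land p \vdash q$, one can produce a realizer for $r \vdash p \to q$. Together with the conjunction computed in \reflemm{products}, these establish the required adjunction.

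For (i), suppose $(e^+, e^-) \subseteq (\tilde e^+, \tilde e^-)$ in $(p \to q)^{++}$. Then $e^+[a] \subseteq \tilde e^+[a]$ and $e^-[(a,b)] \subseteq \tilde e^-[(a,b)]$ by the monotonicity of the aggregation $x \mapsto x[y]$ noted immediately after the introduction of that notation. Enlarging $e^-[(a,b)]$ only strengthens the antecedent of the defining implication, while enlarging $e^+[a]$ only weakens the consequent thanks to the upwards closure of $q$ in its first argument, so the implication is preserved.

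For (ii), exploiting the isomorphism $((p \to q)^- + p^-)^* \cong (p^{++} \times q^-)^* \times (p^-)^*$, I would define
\[ g^+((e^+, e^-), a) = e^+[a], \qquad g^-(((e^+, e^-), a), b) = \big( \{(a, b)\}, \, e^-[(a, b)] \big). \]
Unwinding the hypothesis on the first coordinate of $g^-$ reproduces exactly $(p \to q)((e^+, e^-), (a,b))$, while the hypothesis on the second coordinate supplies $p(a, c)$ for each $c \in e^-[(a,b)]$; one modus ponens then yields $q(e^+[a], b)$.

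For (iii), suppose $(h^+, h^-)$ realizes $r \land p \vdash q$ and split $h^-$ as $(h^-_r, h^-_p)$ with components in $(r^-)^*$ and $(p^-)^*$ respectively. I set
\[ k^+(x) = \big(\, \{ \lambda a.\, h^+(x, a) \}, \, \{ \lambda (a,b).\, h^-_p(x, a, b) \} \,\big), \qquad k^-(x, (a, b)) = h^-_r(x, a, b), \]
where the $\lambda$-abstractions denote Kleene indices produced uniformly in $x$ by the $s$-$m$-$n$ theorem. Because each coordinate of $k^+(x)$ is a singleton, the notation $(-)[a]$ collapses to ordinary application, so the implication demanded by $(p \to q)(k^+(x),(a,b))$ becomes, once the antecedent arising from $k^-$ is assumed, exactly the instance of the defining property of $(h^+, h^-)$ at $(x, a, b)$. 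The main obstacle is not conceptual but bookkeeping: one has to juggle the isomorphism $(X + Y)^* \cong X^* \times Y^*$ and the aggregation $x[y]$ carefully, the essential insight being that collapsing the Herbrandized finite-set structure to singletons recovers the Dialectica-style currying underlying this lemma.
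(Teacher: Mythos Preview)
Your proof is correct and essentially matches the paper's: your currying step (iii) is exactly the paper's construction of the realizer for $r \vdash p \to q$ from one for $r \land p \vdash q$, while for the converse the paper gives the direct realizer for general $r$ rather than deriving it from evaluation plus composition as you do. You also check the upwards-closure condition (i), which the paper leaves implicit.
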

\begin{proof}
Suppose $(e^+, e^-)$ realizes $r \land p \vdash q$. Then $r \vdash (p \to q)$ is realized by 
\begin{eqnarray*}
f^+(x) & = & (\{\lambda y. e^+(x, y)\}, \{\lambda y,z. \pi_2 e^-((x, y), z) \}), \\
f^-(x, (y, z)) & = & \pi_1 e^-((x, y), z).
\end{eqnarray*}
Conversely, if $(e^+, e^-)$ realizes $r \vdash (p \to q)$, then $r \land p \vdash q$ is realized by:
\begin{eqnarray*}
f^+(x, y) & = & (\pi_1e^+(x))[y], \\
f^-((x, y), z) & = & (e^-(x, (y, z)), (\pi_2e^+(x))[(y, z)]).
\end{eqnarray*}
\end{proof}

\begin{lemm}{universalquantification} For $u: I \to J$ and $\varphi: I \to \Sigma_{st}$ universal quantification is given by:
\begin{eqnarray*}
\forall_u(\varphi)_j^+ & = & \bigcap_{i \in I} \, [u(i) = j] \to \varphi_i^{++} \\
\forall_u(\varphi)_j^- & = & \bigcup_{i \in u^{-1}(j)} \varphi^-_i \\
\forall_u(\varphi)_j(a, b) & \Leftrightarrow & (\forall i \in u^{-1}(j)) \, \big( \, b \in \varphi_i^{-} \to \varphi_i(a[0], b)\, \big).
\end{eqnarray*}
Here $[i = j] = \{ 0 \, : \, i = j \}$. Also the Beck-Chevalley condition holds.
\end{lemm}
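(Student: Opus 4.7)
The plan is to verify two things: the adjunction $u^* \dashv \forall_u$ and Beck-Chevalley. For the adjunction, I would check both directions of $u^*\psi \vdash_I \varphi \iff \psi \vdash_J \forall_u(\varphi)$ by explicit construction of realizers. For Beck-Chevalley, I would unfold both sides and observe they agree essentially on the nose.

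For the direction $(\Leftarrow)$, given $(e^+, e^-)$ realizing $u^*\psi \vdash_I \varphi$, I would set $f^+(x) = \{\lambda z.\, e^+(x)\}$ and $f^-(x,b) = e^-(x,b)$. The key point is that since $e^+ \in \bigcap_{i} \psi_{u(i)}^{++} \to \varphi_i^{++}$, for $x \in \psi_j^{++}$ the value $e^+(x)$ lies in $\varphi_i^{++}$ for every $i \in u^{-1}(j)$, so $\lambda z.\, e^+(x)$ does lie in the intersection $\bigcap_i [u(i)=j] \to \varphi_i^{++}$ (the clauses with $u(i)\neq j$ are vacuous). Verifying the entailment condition then reduces to applying the hypothesis on $e$ at index $i$, using that $b \in \varphi_i^- \subseteq \forall_u(\varphi)_j^-$ and that $f^+(x)[0] = e^+(x)$. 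For the direction $(\Rightarrow)$, given $(f^+,f^-)$ realizing $\psi \vdash_J \forall_u(\varphi)$, I would define $e^+(x) = f^+(x)[0]$ and $e^-(x,b) = f^-(x,b)$; the former lies in $\varphi_i^{++}$ because it is a finite union of elements of $\varphi_i^{++}$, which is closed under such unions. The verification of the entailment condition specializes the $\forall i'\in u^{-1}(u(i))$ clause at $i'=i$.

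For Beck-Chevalley, given a pullback square with $K = I \times_J L$, $v' : K \to I$ and $u' : K \to L$, I would compare $(v^*\forall_u\varphi)_l$ and $(\forall_{u'} v'^*\varphi)_l$ pointwise. Since elements $(i',l') \in K$ with $u'(i',l') = l$ correspond bijectively to $i'\in I$ with $u(i') = v(l)$, both the witness part $\bigcap_{i'} [u(i')=v(l)] \to \varphi_{i'}^{++}$ and the challenge part $\bigcup_{i': u(i')=v(l)} \varphi_{i'}^-$ match literally, as does the predicate $\forall i': u(i')=v(l)\,(b \in \varphi_{i'}^- \to \varphi_{i'}(a[0],b))$. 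So Beck-Chevalley holds strictly, not merely up to entailment.

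I do not anticipate a genuine obstacle: the entire argument is bookkeeping, and the only subtle point is that the notation $[u(i)=j] \to \varphi_i^{++}$ is tuned precisely so that $\bigcap_{i} [u(i)=j] \to \varphi_i^{++}$ becomes the set of codes $g$ with $g(0) \in \bigcap_{i \in u^{-1}(j)} \varphi_i^{++}$, which must be handled correctly in both directions of the adjunction when applying $(-)[0]$.
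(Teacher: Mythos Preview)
Your proposal is correct and matches the paper's proof essentially line for line: the paper constructs the realizers for the two directions of the adjunction exactly as you do (taking $e^+(x)[0]$ and $e^-(x,y)$ in one direction, and $\{\lambda y.\, f^+(x)\}$ and $f^-(x,y)$ in the other), and for Beck--Chevalley simply declares it ``immediate'' where you unfold the pullback explicitly. One cosmetic point: your labels $(\Leftarrow)$ and $(\Rightarrow)$ are swapped relative to the biconditional you wrote, but the content under each label is right.
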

\begin{proof}
Suppose $\varphi: I \to \Sigma_{st}$ and $\psi: J \to \Sigma_{st}$. We have to show the equivalence of the following two statements:
\begin{itemize}
\item[(a)] $\psi \vdash_J \forall_u(\varphi)$, i.e., there exist
\[ e^+ \in \bigcap_{j \in J} \psi_j^{++} \to \forall_u(\varphi)_j^{++}  \qquad \mbox{ and } \qquad e^- \in \bigcap_{j \in J} \psi_j^{++} \times \forall_u(\varphi)_j^-  \to (\psi_j^-)^* \]
such that
\[ \forall j \in J, a \in \psi_j^{++}, b \in \forall_u(\varphi)^-_j \big( \, \forall c \in e^-(a, b) \, \psi_j(a, c) \big) \to \forall_u(\varphi)_j (e^+(a), b). \]
\item[(b)] $u^* \psi \vdash_I \varphi$, i.e., there exist 
\[ f^+ \in \bigcap_{i \in I} \psi_{u(i)}^{++} \to \varphi_i^{++} \qquad \mbox{ and } \qquad f^- \in \bigcap_{i \in I} \, \psi_{u(i)}^{++} \times \varphi_i^- \to (\psi_{u(i)}^-)^* \]
such that
\[ \forall i \in I, a \in \psi_{u(i)}^{++}, b \in \varphi_i^- \big( \, \forall c \in f^-(a, b) \, \psi_{u(i)}(a, c) \, \big)  \to \varphi_i(f^+(a), b). \]
\end{itemize}
(a) $\Rightarrow$ (b): Take $f^+(x) = e^+(x)[0]$ and $f^-(x, y) = e^-(x, y)$. Now let $i \in I, a \in \psi_{u(i)}^{++}, b \in \varphi^-_i$ and suppose for all $c \in f^-(a, b)$ we have $\psi_{u(i)}(a, c)$. Then $\forall_u(\varphi)_{u(i)}(e^+(a), b)$ and $\varphi_i(e^+(a)[0], b)$, hence $\varphi_i(f^+(a), b)$, as desired.

(b) $\Rightarrow$ (a): Take $e^+(x) = \{ \lambda y. f^+(x) \}$ and $e^-(x, y) = f^-(x, y)$. Then let $j \in J, a \in \psi_j^{++}, b \in \forall_u(\varphi)_j^-$ and suppose for every $c \in e^-(a, b)$ we have $\psi_j(a, c)$. We want to show $\forall_u(\varphi)_j(e^+(a), b)$, i.e., $(\forall i \in u^{-1}(j)) \, \big( \, b \in \varphi_i^- \to \varphi_i(f^+(a), b) \, \big)$. But this is immediate from (b).

Validity of the Beck-Chevalley condition is immediate.
\end{proof}

\begin{lemm}{existquantification} For $u: I \to J$ and $\varphi: I \to \Sigma_{st}$ existential quantification is given by:
\begin{eqnarray*}
\exists_u(\varphi)_j^+ & = & \bigcup_{i \in u^{-1}(j)} \varphi_i^{++} \\
\exists_u(\varphi)_j^- & = & \bigcap_{i \in u^{-1}(j)} \varphi_i^{++} \to (\varphi^-_i)^* \\
\exists_u(\varphi)_j(a, b) & \Leftrightarrow & (\exists i \in u^{-1}(j)) \, ( \exists s \in a) \, \big( \, s \in \varphi_i^{++} \land (\forall c \in b(s)) \, \varphi_i(s, c) \, \big).
\end{eqnarray*}
Also the Beck-Chevalley condition holds.
\end{lemm}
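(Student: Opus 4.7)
The plan is to verify the adjunction $\exists_u \dashv u^*$, in parallel with the universal quantification case just proved, by constructing explicit translations of realizers in both directions; Beck--Chevalley will then drop out by inspection. Concretely, I will show that the data witnessing $\exists_u(\varphi) \vdash_J \psi$ corresponds bijectively (up to provable equivalence of realizers) to the data witnessing $\varphi \vdash_I u^*\psi$.

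For $\varphi \vdash_I u^*\psi \Rightarrow \exists_u(\varphi) \vdash_J \psi$, given realizers $(f^+, f^-)$, I would put
\[ e^+(a) := \bigcup_{s \in a} f^+(s), \qquad e^-(a, b) := \{t\}, \]
where $t$ is the uniform map $s' \mapsto f^-(s', b)$. Here $e^+(a)$ is a legitimate element of $\psi_j^{++}$ because each $s \in a$ lies in some $\varphi_i^{++}$ with $i \in u^{-1}(j)$, so by uniformity of $f^+$ we have $f^+(s) \in \psi_{u(i)}^{++} = \psi_j^{++}$, and a finite union of finite sets is finite. To verify the entailment, I would unpack the hypothesis $\exists_u(\varphi)_j(a, t)$ to extract some $i \in u^{-1}(j)$ and $s \in a$ with $s \in \varphi_i^{++}$ and $\varphi_i(s, d)$ for every $d \in t(s) = f^-(s, b)$, apply the realizer at index $i$ to obtain $\psi_j(f^+(s), b)$, and then invoke upwards closure of $\psi_j$ in its first component (since $f^+(s) \subseteq e^+(a)$) to conclude $\psi_j(e^+(a), b)$.

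For $\exists_u(\varphi) \vdash_J \psi \Rightarrow \varphi \vdash_I u^*\psi$, the natural recipe is
\[ f^+(x) := e^+(\{x\}), \qquad f^-(x, b) := e^-(\{x\}, b)[x], \]
using the $x[y]$ notation from Section~2, specialized at the index $i$ for which $x \in \varphi_i^{++}$. Verifying this realizes the entailment will reduce to observing that for each $c \in e^-(\{x\}, b)$, instantiating $i' = i$ and $s = x$ witnesses $\exists_u(\varphi)_{u(i)}(\{x\}, c)$, because $c(x) \subseteq e^-(\{x\}, b)[x] = f^-(x, b)$ is subsumed by the standing assumption $\forall d \in f^-(x, b)\, \varphi_i(x, d)$.

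Beck--Chevalley will then follow because, for a pullback square, the unions, intersections and predicates describing $\exists$ along one leg after pulling back along the other agree on the nose with those obtained in the opposite order, so the identity realizers suffice. The main obstacle I anticipate is bookkeeping around the singleton $e^-(a, b) = \{t\}$ in the first direction: this is only sound because the uniformity of $f^-$ lets a single realizer $t$ respond correctly regardless of which index $i \in u^{-1}(j)$ the witness $s$ happens to belong to, and one has to keep careful track of the types $(\exists_u(\varphi)_j^-)^* = (\bigcap_{i'} \varphi_{i'}^{++} \to (\varphi_{i'}^-)^*)^*$ when checking that $t$ really inhabits the required intersection.
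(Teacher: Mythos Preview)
Your proposal is correct and follows the paper's proof essentially verbatim: the realizers $e^+(a)=\bigcup_{s\in a} f^+(s)$, $e^-(a,b)=\{\lambda s'.\, f^-(s',b)\}$ for one direction and $f^+(x)=e^+(\{x\})$, $f^-(x,b)=e^-(\{x\},b)[x]$ for the other are exactly the ones used there, and your verification sketch (including the appeal to upward closure of $\psi_j$ in the first component and the typing check for $t$) matches the paper's argument. Beck--Chevalley is likewise dispatched by inspection in the paper.
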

\begin{proof}
Suppose $\varphi: I \to \Sigma_{st}$ and $\psi: J \to \Sigma_{st}$. We have to show the equivalence of the following two statements:
\begin{itemize}
\item[(a)] $\exists_u(\varphi) \vdash_J \psi$, i.e., there exist
\[ e^+ \in \bigcap_{j \in J} \exists_u(\varphi)_j^{++} \to \psi_j^{++} \qquad \mbox{ and } \qquad e^- \in \bigcap_{j \in J} \exists_u(\varphi)_j^{++} \times \psi_j^-  \to (\exists_u (\varphi)_j^-)^* \]
such that
\[ \forall j \in J, a \in \exists_u(\varphi)_j^{++}, b \in \psi_j^{-} \big( \, \forall c \in e^-(a, b) \exists_u(\varphi)_j(a, c) \big) \to \psi_j(e^+(a), b). \]
\item[(b)] $\varphi \vdash_I u^*\psi$, i.e., there exist 
\[ f^+ \in \bigcap_{i \in I} \varphi_i^{++} \to \psi_{u(i)}^{++} \qquad \mbox{ and } \qquad f^- \in \bigcap_{i \in I} \varphi_i^{++} \times \psi_{u(i)}^- \to (\varphi_i^-)^* \]
such that
\[ \forall i \in I, a \in \varphi_i^{++}, b \in \psi_{u(i)}^- \big( \, \forall c \in f^-(a, b) \, \varphi_i(a, c) \, \big)  \to \psi_{u(i)}(f^+(a), b). \]
\end{itemize}
(a) $\Rightarrow$ (b): Take $f^+(x) = e^+(\{x \})$ and $f^-(x, y) = e^-(\{x\}, y)[x] = \bigcup \{ z(x) \, : \, z \in e^-(\{x \}, y) \}$. Now let $i \in I, a \in \varphi_i^{++}, b \in \psi_{u(i)}^-$ and suppose for all $c \in f^-(a, b)$ we have $\varphi_i(a, c)$. Hence
\[ (\forall d \in e^-(\{a\}, b)) \, (\forall c \in d(a) ) \, \varphi_i(a, c). \]
Writing $j = u(i)$, we have $\{ a \} \in \exists_u(\varphi)_j^{++}$ and $b \in \psi_j^-$ and
\[ (\forall d \in e^-(\{a \}, b)) \, \exists_u(\varphi)_j (\{a\}, d). \]
Therefore $\psi_j(e^+(\{a \}), b)$, i.e., $\psi_{u(i)}(f^+(a), b)$.

(b) $\Rightarrow$ (a): Take $e^+(x) = \bigcup_{z \in x} f^+(z)$ and $e^-(x, y) = \{ \lambda z. f^-(z, y) \}$. Then let $j \in J, a \in \exists_u(\varphi)_j^{++}, b \in \psi_j^-$ and suppose for every $d \in e^-(a, b)$ we have $\exists_u(\varphi)_j(a, d)$. Concretely, this means that there is an $i \in u^{-1}(j)$ and an $s \in a$ such that $s \in \varphi_i^{++}$ and $\varphi_i  (s, c)$ for all $c \in f^-(s, b)$. This implies $\psi_{u(i)}(f^+(s), b)$, whence $\psi_j(e^+(a), b)$, because $\psi_j$ is upwards closed in the first component.

Validity of the Beck-Chevalley condition is immediate.
\end{proof}

\begin{lemm}{genpred}
The generic predicate is given by the identity on $\Sigma_{st}$.
\end{lemm}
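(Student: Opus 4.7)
The claim that the identity map $\mathrm{id}_{\Sigma_{st}}$ is a generic predicate is essentially immediate from the definition of the tripos. The plan is to unpack what a generic predicate requires and then produce the classifying map by brute force: given any set $I$ and any predicate $\varphi: I \to \Sigma_{st}$, I must exhibit a map $u: I \to \Sigma_{st}$ such that the reindexing $u^*(\mathrm{id}_{\Sigma_{st}})$ is equivalent to $\varphi$ in the preorder $\vdash_I$.

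Taking $u := \varphi$ itself, and recalling that reindexing has been defined as precomposition, one obtains $u^*(\mathrm{id}_{\Sigma_{st}}) = \mathrm{id}_{\Sigma_{st}} \circ \varphi = \varphi$ on the nose, so there is not even an equivalence to verify. The only substantive point worth noting is that $\Sigma_{st}$ is genuinely a set, which is immediate from the defining inclusion $\Sigma_{st} \subseteq {\rm Pow}(\NN)^2 \times {\rm Pow}(\NN \times \NN)$; this is what makes $\mathrm{id}_{\Sigma_{st}}$ a legitimate predicate indexed over a set rather than a proper class. There is no real obstacle to this lemma.

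Combined with the preceding lemmas providing truth and falsity (\reflemm{truthandfalsity}), conjunction (\reflemm{products}), disjunction (\reflemm{sums}), implication (\reflemm{exponentials}), and the two quantifiers with the Beck-Chevalley condition (\reflemm{universalquantification} and \reflemm{existquantification}), \reftheo{istripos} then follows.
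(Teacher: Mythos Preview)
Your proof is correct and takes essentially the same approach as the paper, which simply says ``Clear.'' You have merely unpacked what this means: since predicates over $I$ are by definition functions $I \to \Sigma_{st}$ and reindexing is precomposition, taking $u = \varphi$ gives $u^*(\mathrm{id}_{\Sigma_{st}}) = \varphi$ on the nose.
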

\begin{proof} Clear.
\end{proof}

This completes the proof of \reftheo{istripos}.

\section{Open questions}

We have defined a new topos, but have not established any of its basic properties. Given the state of the art, we would conjecture the following:
\begin{enumerate}
\item Like the modified Diller-Nahm topos ${\bf DN}_m$ the topos we have defined is not 2-valued and its $\lnot\lnot$-sheaves do not coincide with the category of sets (see \cite{streicher06, biering08}).
\item First-order arithmetic in the topos we constructed is given by the $D_{st}$-interpretation of \cite{bergetal12} combined with using ${\rm HRO}$ as one's models of G\"odel's $T$.
\item As with the Herbrand topos, the functor $\nabla: \Sets \to {\bf Dst}$ preserves and refllects (at least) first-order logic, but not the natural numbers object. Hence $\nabla \NN$ is a model of nonstandard arithmetic in the $D_{st}$-topos (see \cite{berg12}).
\item As Jaap van Oosten has shown that the Herbrand topos ${\bf Her}$ is a subtopos of the modified realizability topos ${\bf Mod}$ and it is known that there is a connected geometric morphism from the modified Diller-Nahm topos ${\bf DN}_m$ to the modified realizability topos ${\bf Mod}$ (see \cite{biering08}), one would expect the $D_{st}$-topos to be a subtopos of ${\bf DN}_m$ and there to be a connected geometric morphism from it to the Herbrand topos. Indeed, one would expect there to be  a commuting square (pullback?) of toposes
\diag{ {\bf Dst} \ar[r] \ar[d] & {\bf DN}_m \ar[d] \\
{\bf Her} \ar[r] & {\bf Mod} }
in which the horizontal arrows are inclusions of toposes and the vertical ones are connected geometric morphisms.
\end{enumerate}

\bibliographystyle{plain} \bibliography{dsttopos}

\end{document}